
\documentclass{spie_fickus}

\usepackage{amsthm}
\usepackage{amssymb}
\usepackage{amsmath}
\usepackage{graphicx}

\newcommand{\bbC}{\mathbb{C}}
\newcommand{\bbF}{\mathbb{F}}

\newcommand{\bbR}{\mathbb{R}}

\newcommand{\bfA}{\mathbf{A}}
\newcommand{\bfB}{\mathbf{B}}

\newcommand{\bfD}{\mathbf{D}}

\newcommand{\bfG}{\mathbf{G}}

\newcommand{\bfI}{\mathbf{I}}
\newcommand{\bfJ}{\mathbf{J}}

\newcommand{\bfU}{\mathbf{U}}

\newcommand{\bfx}{\mathbf{x}}
\newcommand{\bfy}{\mathbf{y}}

\newcommand{\bfzero}{\boldsymbol{0}}
\newcommand{\bfone}{\boldsymbol{1}}

\newcommand{\bfphi}{\boldsymbol{\varphi}}

\newcommand{\bfPhi}{\boldsymbol{\Phi}}

\newcommand{\Tr}{\operatorname{Tr}}
\newcommand{\SRG}{\operatorname{SRG}}

\newcommand{\Fro}{\mathrm{Fro}}

\newcommand{\abs}[1]{|{#1}|}

\newcommand{\bigparen}[1]{\bigl({#1}\bigr)}

\newcommand{\bigbracket}[1]{\bigl[{#1}\bigr]}

\newcommand{\biggbracket}[1]{\biggl[{#1}\biggr]}

\newcommand{\set}[1]{\{{#1}\}}
\newcommand{\bigset}[1]{\bigl\{{#1}\bigr\}}

\newcommand{\norm}[1]{\|{#1}\|}

\newcommand{\ip}[2]{\langle{#1},{#2}\rangle}

\newtheorem{theorem}{Theorem}[section]

\newtheorem{lemma}[theorem]{Lemma}

\theoremstyle{definition}

\title{Detailing the equivalence between real equiangular tight frames and certain strongly regular graphs}

\author{Matthew Fickus and Cody E.\ Watson
\skiplinehalf
Department of Mathematics and Statistics, Air Force Institute of Technology\\Wright-Patterson Air Force Base, Ohio 45433, USA}
\authorinfo{Send correspondence to Matthew Fickus: E-mail: Matthew.Fickus@gmail.com}

\begin{document}
\maketitle

\begin{abstract}
An equiangular tight frame (ETF) is a set of unit vectors whose coherence achieves the Welch bound, and so is as incoherent as possible.
They arise in numerous applications.
It is well known that real ETFs are equivalent to a certain subclass of strongly regular graphs.
In this note, we give some alternative techniques for understanding this equivalence.
In a later document, we will use these techniques to further generalize this theory.
\end{abstract}

\keywords{equiangular tight frames, strongly regular graphs}

\section{Introduction}

Let $m\leq n$ be positive integers and let $\set{\bfphi_i}_{i=1}^{n}$ be a sequence of unit vectors in $\bbF^m$ where the field $\bbF$ is either the real line $\bbR$ or the complex plane $\bbC$.
The quantity $\max_{i\neq j}\abs{\ip{\bfphi_i}{\bfphi_j}}$ is known as the \textit{coherence} of $\set{\bfphi_i}_{i=1}^{n}$.
In many real world applications, one seeks a sequence of $n$ unit norm vectors in $\bbF^m$ whose coherence is as small as possible.
Geometrically speaking, this is equivalent to packing lines in Euclidean space: for any real unit vectors $\bfphi_i$ and $\bfphi_j$, we have $\abs{\ip{\bfphi_i}{\bfphi_j}}=\cos(\theta_{i,j})$ where $\theta_{i,j}$ is the interior angle of the lines spanned by $\bfphi_i$ and $\bfphi_j$;
finding unit vectors $\set{\bfphi_i}_{i=1}^{n}$ with minimal coherence is thus equivalent to arranging $n$ lines so that the minimum pairwise angle between any two lines is as large as possible.

Note that for a fixed $m<n$, the coherence of a sequence of $n$ unit vectors in $\bbF^m$ cannot get arbitrarily small:
the coherence is never zero (since there cannot be $n$ orthonormal vectors in $\bbF^m$) and moreover the coherence is a continuous function of $\set{\bfphi_i}_{i=1}^{n}$, which lies on the compact set of the Cartesian product of $n$ copies of the unit sphere in $\bbF^m$.
The most famous example of an explicit lower bound on the coherence is the \textit{Welch bound}~\cite{Welch74,StrohmerH03}:
\begin{theorem}
Given positive integers $m\leq n$, for any unit norm vectors $\set{\bfphi_i}_{i=1}^{n}$ in $\bbF^M$ we have
\begin{equation}
\label{equation.Welch bound}
\sqrt{\tfrac{n-m}{m(n-1)}}
\leq\max_{i\neq j}\abs{\ip{\bfphi_i}{\bfphi_j}},
\end{equation}
where equality holds if and only if $\set{\bfphi_i}_{i=1}^{n}$ is an \textit{equiangular tight frame} (ETF) for $\bbF^m$.
\end{theorem}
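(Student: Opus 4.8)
The plan is to pass from the vectors $\set{\bfphi_i}_{i=1}^{n}$ to their synthesis operator, the $m\times n$ matrix $\bfPhi$ whose $i$th column is $\bfphi_i$, and to compare two evaluations of $\Tr(\bfG^2)$, where $\bfG=\bfPhi^*\bfPhi$ is the $n\times n$ Gram matrix. Since each $\bfphi_i$ is a unit vector, the diagonal entries of $\bfG$ equal $1$, so $\Tr(\bfG)=n$, and since $\bfG$ is Hermitian with $\abs{\bfG_{i,j}}=\abs{\ip{\bfphi_i}{\bfphi_j}}$,
\begin{equation*}
\Tr(\bfG^2)=\sum_{i=1}^{n}\sum_{j=1}^{n}\abs{\bfG_{i,j}}^2=n+\sum_{i\neq j}\abs{\ip{\bfphi_i}{\bfphi_j}}^2.
\end{equation*}
On the other hand, $\Tr(\bfG^2)=\Tr((\bfPhi^*\bfPhi)^2)=\Tr((\bfPhi\bfPhi^*)^2)$, where the $m\times m$ frame operator $\bfPhi\bfPhi^*$ is positive semidefinite with $\Tr(\bfPhi\bfPhi^*)=\Tr(\bfG)=n$.

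The crux is the Cauchy--Schwarz estimate $\Tr((\bfPhi\bfPhi^*)^2)\geq\tfrac1m\bigparen{\Tr(\bfPhi\bfPhi^*)}^2=\tfrac{n^2}{m}$, obtained by applying $(\sum_k\lambda_k)^2\leq m\sum_k\lambda_k^2$ to the eigenvalues $\set{\lambda_k}_{k=1}^{m}$ of $\bfPhi\bfPhi^*$ (equivalently, by expanding $\Tr\bigl[(\bfPhi\bfPhi^*-\tfrac nm\bfI)^2\bigr]\geq0$); equality holds exactly when all the $\lambda_k$ coincide, i.e.\ when $\bfPhi\bfPhi^*=\tfrac nm\bfI$, which is to say $\set{\bfphi_i}_{i=1}^{n}$ is a tight frame for $\bbF^m$. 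Combining this with the displayed identity gives $\sum_{i\neq j}\abs{\ip{\bfphi_i}{\bfphi_j}}^2\geq\tfrac{n(n-m)}{m}$, and since this sum ranges over the $n(n-1)$ ordered pairs with $i\neq j$, its largest term is at least its average:
\begin{equation*}
\max_{i\neq j}\abs{\ip{\bfphi_i}{\bfphi_j}}^2\geq\frac1{n(n-1)}\sum_{i\neq j}\abs{\ip{\bfphi_i}{\bfphi_j}}^2\geq\frac{n-m}{m(n-1)}.
\end{equation*}
Taking square roots yields \eqref{equation.Welch bound}.

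For the equality characterization, equality in \eqref{equation.Welch bound} forces equality in both inequalities of the last display. Equality in the ``maximum $\geq$ average'' step says that $\abs{\ip{\bfphi_i}{\bfphi_j}}$ is the same for every $i\neq j$, i.e.\ the system is equiangular, while equality in the Cauchy--Schwarz step says, as noted, that it is a tight frame; together these say exactly that $\set{\bfphi_i}_{i=1}^{n}$ is an ETF, and conversely an ETF makes both steps equalities. The one subtlety to keep an eye on is the degenerate case in which the $\bfphi_i$ do not span $\bbF^m$: then $\bfPhi\bfPhi^*$ has a zero eigenvalue yet its eigenvalues sum to $n>0$, so the Cauchy--Schwarz step is strict --- consistent with the fact that tightness already forces spanning. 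I expect that this equality bookkeeping, ensuring that ``equiangular'' and ``tight'' are each pinned down by exactly one of the two inequalities, is the only place needing care; the estimates themselves are routine.
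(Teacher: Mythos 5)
Your proof is correct and takes essentially the same route as the paper's: your Cauchy--Schwarz estimate on the eigenvalues of $\bfPhi\bfPhi^*$ is exactly the paper's expansion of $0\leq\norm{\bfPhi\bfPhi^*-\tfrac nm\bfI}_\Fro^2$ (as you yourself note), and the max-versus-average step and the equality bookkeeping (tightness from one inequality, equiangularity from the other) match the paper's argument as well.
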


To fully understand this result, we first establish some notation and terminology.
For any vectors $\set{\bfphi_i}_{i=1}^{n}$ in $\bbF^m$, the corresponding \textit{synthesis operator} is the $m\times n$ matrix $\bfPhi$ which has the vectors $\set{\bfphi_i}_{i=1}^{n}$ as its columns, namely the operator $\bfPhi:\bbF^n\rightarrow\bbF^m$, $\bfPhi\bfy=\sum_{i=1}^{n}\bfy(n)\bfphi_i$.
Composing $\bfPhi$ with its $n\times m$ adjoint (conjugate transpose) $\bfPhi^*$ yields the $m\times m$ \textit{frame operator} $\bfPhi\bfPhi^*$
as well as the $n\times n$ \textit{Gram matrix} $\bfPhi^*\bfPhi$ whose $(i,j)$th entry is $(\bfPhi^*\bfPhi)(i,j)=\ip{\bfphi_i}{\bfphi_j}$.
We say $\set{\bfphi_i}_{i=1}^{n}$ is a \textit{tight frame} if $\bfPhi$ is perfectly conditioned, that is, if there exists $\alpha>0$ such that $\bfPhi\bfPhi^*=\alpha\bfI$.
This is equivalent to having the rows of $\bfPhi$ be orthogonal and equal norm.
We say $\set{\bfphi_i}_{i=1}^{n}$ is \textit{equiangular} when each $\bfphi_i$ is unit norm and the value of $\abs{\ip{\bfphi_i}{\bfphi_j}}$ is constant over all choices of $i\neq j$,
namely when the diagonal entries of $\bfPhi^*\bfPhi$ are $1$ while the off-diagonal entries have constant modulus.

An ETF is a tight frame whose vectors are equiangular.
To see why equality in~\eqref{equation.Welch bound} is achieved if and only if the unit norm vectors $\set{\bfphi_i}_{i=1}^{n}$ are an ETF,
note that if $\set{\bfphi_i}_{i=1}^{n}$ is a unit norm tight frame for $\bbF^m$ then the tight frame constant $\alpha$ is necessarily the redundancy of the frame $\tfrac{n}{m}$ since it satisfies:
\begin{equation*}
m\alpha=\Tr(\alpha\bfI)=\Tr(\bfPhi\bfPhi^*)=\Tr(\bfPhi^*\bfPhi)=\sum_{i=1}^{n}\norm{\bfphi_i}^2=\sum_{i=1}^{n}1=n.
\end{equation*}
As such, the Frobenius norm of the operator $\bfPhi\bfPhi^*-\tfrac{n}{m}\bfI$ is one way of quantifying the \textit{tightness} of $\set{\bfphi_i}_{i=1}^{n}$.
Moreover, using similar properties of the trace, we see this nonnegative quantity can be rewritten as
\begin{align*}
0
&\leq\norm{\bfPhi\bfPhi^*-\tfrac{n}{m}\bfI}_\Fro^2\\
&=\Tr(\bfPhi\bfPhi^*-\tfrac{n}{m}\bfI)^2\\
&=\Tr[(\bfPhi\bfPhi^*)^2]-2\tfrac{n}{m}\Tr(\bfPhi\bfPhi^*)+\tfrac{n^2}{m^2}\Tr(\bfI)\\
&=\Tr[(\bfPhi^*\bfPhi)^2]-2\tfrac{n}{m}\Tr(\bfPhi^*\bfPhi)+\tfrac{n^2}{m}\\
&=\sum_{i=1}^{n}\sum_{j=1}^{n}\abs{\ip{\bfphi_i}{\bfphi_j}}^2-\tfrac{n^2}{m}\\
&=\sum_{i=1}^{n}\sum_{\substack{j=1\\j\neq i}}^{n}\abs{\ip{\bfphi_i}{\bfphi_j}}^2-n(\tfrac{n}{m}-1).
\end{align*}
Bounding the above summands by their maximum then gives the inequality
\begin{equation*}
0
\leq n(n-1)\max_{i\neq j}\abs{\ip{\bfphi_i}{\bfphi_j}}^2-n(\tfrac{n}{m}-1).
\end{equation*}
When rearranged, this gives~\eqref{equation.Welch bound}.
Moreover, in order to have equality in~\eqref{equation.Welch bound}, we necessarily have equality throughout the above analysis, namely that $\norm{\bfPhi\bfPhi^*-\tfrac{n}{m}\bfI}_\Fro^2=0$ (meaning $\set{\bfphi_i}_{i=1}^{n}$ is a tight frame) and that $\abs{\ip{\bfphi_i}{\bfphi_j}}^2$ is constant over all $i\neq j$ (meaning $\set{\bfphi_i}_{i=1}^{n}$ is equiangular).
To summarize, the columns of an $m\times n$ matrix $\bfPhi$ achieve the Welch bound if and only if $\bfPhi$ has equal-norm orthogonal rows and unit-norm equiangular columns.
For a nontrivial example, consider the following $6\times 16$ real \textit{Steiner} ETF~\cite{FickusMT12} which yields an optimal packing of $16$ lines in $\bbR^6$:
\begin{equation*}
\bfPhi
=\frac1{\sqrt{3}}\left[\begin{array}{rrrrrrrrrrrrrrrr}
1&-1&\phantom{+{}}1&-1&\phantom{+{}}1&-1&\phantom{+{}}1&-1& 0& 0& 0& 0& 0& 0& 0& 0\\
0& 0& 0& 0& 0& 0& 0& 0&\phantom{+{}}1&-1&\phantom{+{}}1&-1&\phantom{+{}}1&-1&\phantom{+{}}1&-1\\
1&\phantom{+{}}1&-1&-1& 0& 0& 0& 0&\phantom{+{}}1&\phantom{+{}}1&-1&-1& 0& 0& 0& 0\\
0& 0& 0& 0&\phantom{+{}}1&\phantom{+{}}1&-1&-1& 0& 0& 0& 0&\phantom{+{}}1&\phantom{+{}}1&-1&-1\\
1&-1&-1&\phantom{+{}}1& 0& 0& 0& 0& 0& 0& 0& 0&\phantom{+{}}1&-1&-1&\phantom{+{}}1\\
0& 0& 0& 0&\phantom{+{}}1&-1&-1&\phantom{+{}}1&\phantom{+{}}1&-1&-1&\phantom{+{}}1& 0& 0& 0& 0
\end{array}\right].
\end{equation*}

Because of their minimal coherence, ETFs are useful in a number of real-world applications, including waveform design for wireless communication~\cite{StrohmerH03}, compressed sensing~\cite{DeVore07} and algebraic coding theory~\cite{JasperMF14}.
In spite of this fact, only a few methods for constructing ETFs are known.
Real ETFs in particular are equivalent to a certain class of very symmetric graphs known as \textit{strongly regular graphs} (SRGs).
Much of the work behind this equivalence was pioneered by J.~J.~Seidel and his contemporaries~\cite{CorneilM91},
and a nice, concise discussion of this mathematics was recently given by Waldon~\cite{Waldron09}.
For the frame community, this equivalence is invaluable since it allows us to leverage the rich SRG literature,
notably the SRG existence tables in a book chapter~\cite{Brouwer07} and website~\cite{Brouwer14} by Brouwer.

In this short paper, we provide some alternative techniques for understanding this equivalence.
Rather than emphasize the spectra of certain matrices, we instead just focus on quadratic relationships that they satisfy.
Three minor, but apparently novel contributions of this work are (i) a closed form expression for the dimensions of an ETF in terms of the parameters of an SRG, (ii) a proof that a given SRG can only lead to one ETF under the standard means of identifying them, and (iii) a realization that real ETFs correspond to SRGs whose parameters satisfy $\mu=\frac k2$ (i.e.\ that restrictions on the parameter $\lambda$ are superfluous).
In the next section, we review the basic properties of SRGs in general.
In the third and final section, we discuss the equivalence between real ETFs and certain SRGs.
The remainder of this section discusses some other basic facts about ETFs that we will use later on.

Most known constructions of ETFs fall into one of two categories: constructions of synthesis operators~\cite{XiaZG05,DingF07,FickusMT12,JasperMF14}, and constructions of Gram matrices~\cite{StrohmerH03,HolmesP04,Waldron09}.
In this paper, we focus on the latter approach.
It is especially attractive in the real setting, since the off-diagonal entries of the Gram matrix of a real ETF can only be two possible values, namely the Welch bound and its negative.
Here, the key idea is the following result:

\begin{lemma}
\label{lemma.Gram matrix ETF characterization}
An $n\times n$ self-adjoint matrix $\bfG$ is the Gram matrix of an ETF if and only if (i) $\bfG^2=\alpha\bfG$ for some $\alpha\in\bbF$, (ii) $\bfG(i,i)=1$ for all $i$, and (iii) there exists $\beta\in\bbF$ such that $\abs{\bfG(i,j)}=\beta$ for all $i\neq j$.
\end{lemma}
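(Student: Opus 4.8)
The plan is to prove both directions by passing between the candidate Gram matrix $\bfG$ and a synthesis operator $\bfPhi$ with $\bfG=\bfPhi^*\bfPhi$, using the standard fact that an $n\times n$ self-adjoint matrix admits such a factorization with $\bfPhi$ an $m\times n$ matrix precisely when it is positive semidefinite, in which case the least admissible $m$ is its rank.

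The forward direction is essentially a restatement. If $\bfG$ is the Gram matrix of an ETF $\set{\bfphi_i}_{i=1}^{n}$ for $\bbF^m$ with synthesis operator $\bfPhi$, then $\bfG=\bfPhi^*\bfPhi$ is self-adjoint, $\bfG(i,i)=\norm{\bfphi_i}^2=1$ yields (ii), and equiangularity yields (iii). For (i), I would invoke the computation from the Welch-bound discussion that the tight frame constant of a unit-norm tight frame for $\bbF^m$ is $\tfrac nm$, so $\bfPhi\bfPhi^*=\tfrac nm\bfI$ and hence $\bfG^2=\bfPhi^*(\bfPhi\bfPhi^*)\bfPhi=\tfrac nm\bfPhi^*\bfPhi=\tfrac nm\bfG$.

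For the converse, suppose $\bfG$ is self-adjoint and satisfies (i)--(iii). From $\bfG^2=\alpha\bfG$ I would conclude that the minimal polynomial of $\bfG$ divides $x(x-\alpha)$, so the eigenvalues of the (diagonalizable, real-spectrum) matrix $\bfG$ lie in $\set{0,\alpha}$; then (ii) gives $\Tr\bfG=n>0$, so $\bfG$ is nonzero, $\alpha$ is an eigenvalue with multiplicity $m=\operatorname{rank}\bfG$, and $n=\Tr\bfG=m\alpha$ forces $\alpha=\tfrac nm>0$. Hence $\bfG$ is positive semidefinite, and I would factor $\bfG=\bfPhi^*\bfPhi$ with $\bfPhi$ an $m\times n$ matrix of rank $m$, whose columns $\set{\bfphi_i}_{i=1}^{n}$ are unit norm and equiangular by (ii) and (iii). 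To finish I would establish tightness: $\bfPhi\bfPhi^*$ is $m\times m$, self-adjoint, and shares its nonzero eigenvalues with $\bfPhi^*\bfPhi=\bfG$; since $\operatorname{rank}\bfPhi=m$, all of them equal $\tfrac nm$, so $\bfPhi\bfPhi^*=\tfrac nm\bfI$ and $\set{\bfphi_i}_{i=1}^{n}$ is an ETF for $\bbF^m$.

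The only genuinely non-cosmetic point is obtaining positive semidefiniteness in the converse: conditions (i) and (iii) do not force it, but (i) confines the spectrum to $\set{0,\alpha}$ while (ii), through the trace, pins $\alpha$ to the positive value $\tfrac nm$. Beyond that, the argument is just bookkeeping with the identity $\bfG=\bfPhi^*\bfPhi$ together with the fact that $\bfPhi\bfPhi^*$ and $\bfPhi^*\bfPhi$ have the same nonzero eigenvalues.
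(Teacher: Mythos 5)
Your proposal is correct and follows essentially the same route as the paper: both directions hinge on the observation that $\bfG^2=\alpha\bfG$ confines the spectrum to $\set{0,\alpha}$ and that $m\alpha=\Tr(\bfG)=n$ forces $\alpha=\tfrac nm>0$, after which one factors $\bfG=\bfPhi^*\bfPhi$. The only (cosmetic) difference is that the paper builds $\bfPhi=\sqrt{\alpha}\,\bfU_1^*$ explicitly from the unitary diagonalization, making $\bfPhi\bfPhi^*=\alpha\bfI$ immediate, whereas you factor abstractly via positive semidefiniteness and then recover tightness from the equality of the nonzero spectra of $\bfPhi\bfPhi^*$ and $\bfPhi^*\bfPhi$.
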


\begin{proof}
($\Rightarrow$) If $\bfG=\bfPhi^*\bfPhi$ where $\bfPhi$ is the synthesis operator of an ETF $\set{\bfphi_i}_{i=1}^{n}$, then $\bfG$ immediately satisfies (ii) and (iii), while (i) follows from the fact that $\set{\bfphi_i}_{i=1}^{n}$ is tight: $\bfG^2=(\bfPhi^*\bfPhi)^2=\bfPhi^*(\alpha\bfI)\bfPhi=\alpha\bfG$.
($\Leftarrow$) Assume $\bfG$ satisfies (i), (ii) and (iii).
Since $\bfG^2=\alpha\bfG$, the only possible eigenvalues of $\bfG$ are $\alpha$ and $0$.
Letting $m$ denote the multiplicity of $\alpha$, the fact that $\bfG$ is self-adjoint (and thus normal) implies there exists an $n\times n$ unitary matrix $\bfU$ and a diagonal matrix $\bfD$ such that
\begin{equation*}
\bfG
=\bfU\bfD\bfU^*
=\left[\begin{array}{ll}\bfU_1&\bfU_2\end{array}\right]\left[\begin{array}{ll}\alpha\bfI&\bfzero\\\bfzero&\bfzero\end{array}\right]\left[\begin{array}{l}\bfU_1^*\\\bfU_2^*\end{array}\right]
=\alpha\bfU_1^{}\bfU_1^*,
\end{equation*}
where $\bfU_1$ and $\bfU_2$ are $n\times m$ and $n\times(n-m)$ submatrices of $\bfU$ whose columns are eigenvectors of $\bfG$ with eigenvalues $\alpha$ and $0$, respectively.
Note that since $m\alpha=\Tr(\bfG)=\sum_{i=1}^{n}\bfG(i,i)=n$ we know that $m$ not zero, and so is positive (being a multiplicity).
Thus, $\alpha=\frac nm$ is nonnegative.
This allows us to let $\set{\bfphi_i}_{i=1}^{n}$ be the columns of the $m\times n$ matrix $\bfPhi=\sqrt{\alpha}\,\bfU_1^*$.
Since the columns of $\bfU_1$ are orthonormal we have $\bfPhi\bfPhi^*=\alpha\bfU_1^*\bfU_1^{}=\alpha\bfI$ and so these vectors form a tight frame for $\bbR^m$.
Moreover, these vectors are an ETF since their Gram matrix $\bfPhi^*\bfPhi=\alpha\bfU_1^{}\bfU_1^*=\bfG$ satisfies (ii) and (iii).
\end{proof}

We will also need the well known fact that every $m\times n$ ETF has complementary $(n-m)\times n$ ETFs.
These ETFs are often called \textit{Naimark complements} since the proof of their existence is similar to Naimark's dilation theorem which, from a frame theory perspective, states that any tight frame is a scaled orthogonal projection of an orthonormal basis.
In the finite-dimensional setting of this paper, all such complements can be constructed using elementary linear algebra.

\begin{lemma}
Let $\bfPhi$ be an $m\times n$ synthesis operator of an ETF, and let $\tilde{\bfPhi}$ be any $(n-m)\times n$ matrix whose rows form an orthogonal basis for the orthogonal complement of the row space of $\bfPhi$, and have squared norm $n/(n-m)$.
Then $\tilde{\bfPhi}$ is the synthesis operator of an ETF and moreover, the two Gram matrices satisfy
\begin{equation*}
\bfI
=\tfrac{m}{n}\bfPhi^*\bfPhi+\tfrac{n-m}{n}\tilde{\bfPhi}^*\tilde{\bfPhi}.
\end{equation*}
\end{lemma}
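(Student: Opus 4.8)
The plan is to reduce the whole statement to the elementary fact that a square matrix with orthonormal rows also has orthonormal columns. First I would rescale both frames. Since $\bfPhi$ is the synthesis operator of an ETF for $\bbF^m$, tightness gives $\bfPhi\bfPhi^*=\tfrac nm\bfI$, so the rows of $\bfPsi:=\sqrt{\tfrac mn}\,\bfPhi$ are orthonormal. Similarly, the hypothesis on $\tilde{\bfPhi}$ says its $n-m$ rows are mutually orthogonal with squared norm $\tfrac n{n-m}$, so the rows of $\tilde{\bfPsi}:=\sqrt{\tfrac{n-m}n}\,\tilde{\bfPhi}$ are orthonormal; since these rows lie in the orthogonal complement of the row space of $\bfPhi$, they are also orthogonal to every row of $\bfPsi$. (Here one should note that the row space of $\bfPhi$ is genuinely $m$-dimensional, since $\bfPhi\bfPhi^*$ is invertible, so that its orthogonal complement is $(n-m)$-dimensional and a matrix $\tilde{\bfPhi}$ of the stated form exists.) Stacking, the $n\times n$ matrix $\bfU:=\left[\begin{array}{c}\bfPsi\\\tilde{\bfPsi}\end{array}\right]$ has orthonormal rows, hence is unitary, hence satisfies $\bfU^*\bfU=\bfI$. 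Expanding $\bfU^*\bfU$ in block form yields $\bfPsi^*\bfPsi+\tilde{\bfPsi}^*\tilde{\bfPsi}=\bfI$, which upon substituting the definitions of $\bfPsi$ and $\tilde{\bfPsi}$ is exactly the claimed identity $\bfI=\tfrac mn\bfPhi^*\bfPhi+\tfrac{n-m}n\tilde{\bfPhi}^*\tilde{\bfPhi}$.

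It then remains to show that $\tilde{\bfPhi}$ is itself the synthesis operator of an ETF, which I would do by verifying the three hypotheses of Lemma~\ref{lemma.Gram matrix ETF characterization} for the self-adjoint matrix $\tilde{\bfG}:=\tilde{\bfPhi}^*\tilde{\bfPhi}$. Condition~(i) is immediate from the hypothesis $\tilde{\bfPhi}\tilde{\bfPhi}^*=\tfrac n{n-m}\bfI$: we get $\tilde{\bfG}^2=\tilde{\bfPhi}^*(\tilde{\bfPhi}\tilde{\bfPhi}^*)\tilde{\bfPhi}=\tfrac n{n-m}\tilde{\bfG}$. For~(ii) and~(iii) I would simply read off entries of the identity just proved. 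Writing $\bfG:=\bfPhi^*\bfPhi$, which has $\bfG(i,i)=1$ and $\abs{\bfG(i,j)}=\beta$ for $i\neq j$ since $\bfPhi$ is an ETF, the diagonal of $\bfI=\tfrac mn\bfG+\tfrac{n-m}n\tilde{\bfG}$ gives $\tilde{\bfG}(i,i)=1$, while for $i\neq j$ the off-diagonal entries give $\tilde{\bfG}(i,j)=-\tfrac m{n-m}\bfG(i,j)$, so $\abs{\tilde{\bfG}(i,j)}=\tfrac m{n-m}\beta$ is constant. Thus Lemma~\ref{lemma.Gram matrix ETF characterization} applies and $\tilde{\bfG}$ is the Gram matrix of an ETF; in fact the columns of $\tilde{\bfPhi}$ themselves are such an ETF, being unit norm by~(ii), equiangular by~(iii), and tight because $\tilde{\bfPhi}\tilde{\bfPhi}^*=\tfrac n{n-m}\bfI$.

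I do not expect a real obstacle here: the argument is essentially a block-matrix computation. The only points that need care are the dimension bookkeeping — confirming that the row space of $\bfPhi$ has dimension exactly $m$ so its complement has dimension $n-m$, which is where such a $\tilde{\bfPhi}$ comes from — and tracking the scaling constants $\sqrt{m/n}$ and $\sqrt{(n-m)/n}$ consistently so that the stacked matrix $\bfU$ comes out exactly unitary rather than merely a scalar multiple of a unitary.
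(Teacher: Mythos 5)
Your proposal is correct and follows essentially the same route as the paper: rescale both synthesis operators so their rows are orthonormal, stack them into a square matrix, invoke the fact that a matrix with orthonormal rows is unitary and hence has orthonormal columns, and then read off from the resulting identity that $\tilde{\bfPhi}^*\tilde{\bfPhi}$ has unit diagonal and constant-modulus off-diagonal entries. The only cosmetic differences are that you route the final step through Lemma~\ref{lemma.Gram matrix ETF characterization} rather than arguing directly, and that you explicitly note the row space of $\bfPhi$ has dimension $m$ (a point the paper leaves implicit); neither changes the substance.
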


\begin{proof}
Since $\bfPhi$ is the synthesis matrix of an ETF we have \smash{$\bfPhi\bfPhi^*=\frac nm\bfI$}.
Thus, the rows of \smash{$(\frac{m}{n})^{\frac12}\,\bfPhi$} are orthonormal.
For any matrix $\tilde{\bfPhi}$ that satisfies our hypotheses, we have \smash{$\tilde{\bfPhi}\tilde{\bfPhi}^*=\frac{n}{m-m}\bfI$},
meaning its columns $\set{\tilde{\bfphi}_i}_{i=1}^{n}$ form a tight frame for $\bbF^{n-m}$.
Moreover, the matrix
\begin{equation*}
\left[\begin{array}{r}
(\frac{m}{n})^{\frac12}\,\bfPhi\\
(\frac{n-m}{n})^{\frac12}\,\tilde{\bfPhi}
\end{array}\right]
\end{equation*}
is square and has orthonormal rows.
It is thus unitary, and so also has orthonormal columns:
\begin{equation*}
\bfI
=
\left[\begin{array}{rr}
(\frac{m}{n})^{\frac12}\,\bfPhi^*&
(\frac{n-m}{n})^{\frac12}\,\tilde{\bfPhi}^*
\end{array}\right]
\left[\begin{array}{r}
(\frac{m}{n})^{\frac12}\,\bfPhi\\
(\frac{n-m}{n})^{\frac12}\,\tilde{\bfPhi}
\end{array}\right]
=\tfrac{m}{n}\bfPhi^*\bfPhi+\tfrac{n-m}{n}\tilde{\bfPhi}^*\tilde{\bfPhi}.
\end{equation*}
In particular, the Gram matrix of $\set{\tilde{\bfphi}_i}_{i=1}^{n}$ is \smash{$\tilde{\bfPhi}^*\tilde{\bfPhi}=\tfrac1{n-m}(n\bfI-m\bfPhi^*\bfPhi)$}.
Since the diagonal entries of $\bfPhi^*\bfPhi$ are $1$ while its off-diagonal entries have constant modulus, the matrix \smash{$\tilde{\bfPhi}^*\tilde{\bfPhi}$} has these same properties, and so $\set{\tilde{\bfphi}_i}_{i=1}^{n}$ is an ETF for $\bbF^{n-m}$.
\end{proof}

As we shall see in the coming sections, in the real case, the structure of an ETF is entirely encoded by the pattern of positive and negative values that lie on the off-diagonal of its Gram matrix.
Taking a Naimark complement simply changes the signs of these values.

\section{Basic facts about strongly regular graphs}

As discussed in the introduction, the Gram matrix $\bfG$ of a real ETF is a real symmetric matrix whose diagonal entries are $1$ while its off-diagonal entries are either the Welch bound or its negative.
As detailed in the next section, $\bfG$ can easily be converted into the adjacency matrix $\bfA$ of a graph.
Moreover, since $\bfG^2=\alpha\bfG$, it is reasonable to believe that $\bfA$ also satisfies some quadratic relationship, which in turn implies the graph possesses certain symmetries.

In general, let $\bfA$ be the \textit{adjacency matrix} of a graph on $v$ vertices, namely a $v\times v$ real symmetric matrix whose entries have value either $0$ or $1$, and whose diagonal entries are all $0$.
The corresponding graph is \textit{regular} if all vertices in the graph have the same number of neighbors, namely if there exists a nonnegative integer $k$ such that $\bfA\bfone=k\bfone$ where $\bfone$ is a $v\times 1$ vectors of ones.
Such a graph is said to be \textit{strongly regular} with nonnegative integer parameters $(v,k,\lambda,\mu)$ if any two neighbors have exactly $\lambda$ neighbors in common while any two nonneighbors have exactly $\mu$ neighbors in common.
A strongly regular graph (SRG) with such parameters is often denoted a $\SRG(v,k,\lambda,\mu)$.

Fortunately, this nonintuitive definition of an SRG has a simpler algebraic characterization:
since $\bfA^2(i,j)$ counts the number of two-step paths from vertex $i$ to $j$, a given graph is a $\SRG(v,k,\lambda,\mu)$ if and only if
\begin{equation}
\label{equation.pre definition of srg}
\bfA^2(i,j)=\left\{\begin{array}{cl}k,&i=j,\\\lambda,&i\neq j, \bfA(i,j)=1,\\\mu,&i\neq j, \bfA(i,j)=0,\end{array}\right.
\end{equation}
namely if and only if
\begin{equation}
\label{equation.definition of srg}
\bfA^2
=(\lambda-\mu)\bfA+(k-\mu)\bfI+\mu\bfJ
\end{equation}
where $\bfJ=\bfone\bfone^*$ is a $v\times v$ matrix of ones.
Because of its simplicity, we take~\eqref{equation.definition of srg} as our definition of an SRG.
To show that a given adjacency matrix $\bfA$ corresponds to an SRG, note it suffices to show that there exists real numbers $x,y,z$ such that $\bfA^2=x\bfA+y\bfI+z\bfJ$.
Indeed, in this case we can define $k=y+z$, $\lambda=x+z$ and $\mu=z$ to obtain~\eqref{equation.definition of srg} which is equivalent to~\eqref{equation.pre definition of srg}; excluding $\bfA=\bfzero$ and $\bfA=\bfJ-\bfI$, both of which are trivially SRGs, we have that each number $k$, $\lambda$ and $\mu$ appears at least once in $\bfA^2$, proving they are nonnegative integers.

Note any adjacency matrix that satisfies \eqref{equation.definition of srg} is necessarily regular, since $\bfA^2(i,i)$ counts the number of two-step paths from vertex $i$ to itself:
\begin{equation*}
k
=\bfA^2(i,i)
=\sum_{j=1}^{v}\bfA(i,j)\bfA(j,i)
=\sum_{j=1}^{v}[\bfA(i,j)]^2
=\sum_{j=1}^{v}\bfA(i,j)
=(\bfA\bfone)(i).
\end{equation*}
That is, $\bfA\bfone=k\bfone$.
Since we also know $\bfA^*=\bfA$, we can conjugate~\eqref{equation.definition of srg} by the vector $\bfone$ to obtain:
\begin{align*}
k^2v
&=(k\bfone)^*k\bfone\\
&=(\bfA\bfone)^*(\bfA\bfone)\\
&=\bfone^*\bfA^2\bfone\\
&=(\lambda-\mu)\bfone^*\bfA\bfone+(k-\mu)\bfone^*\bfI\bfone+\mu\bfone^*\bfJ\bfone\\
&=(\lambda-\mu)\bfone^*(k\bfone)+(k-\mu)\bfone^*\bfone+\mu(\bfone^*\bfone)^2\\
&=(\lambda-\mu)kv+(k-\mu)v+\mu v^2.
\end{align*}
Dividing by $v$ and collecting common terms gives an implicit condition our four SRG parameters:
\begin{equation}
\label{equation.srg parameter relation}
k(k-\lambda-1)
=(v-k-1)\mu.
\end{equation}
The SRG condition~\eqref{equation.definition of srg} also completely determines the spectrum of our adjacency matrix $\bfA$:
$k$ is an eigenvalue of $\bfA$ with eigenvector $\bfone$, and if $\bfA\bfx=\gamma\bfx$ where $\bfx$ is nonzero and orthogonal to $\bfone$, then applying~\eqref{equation.definition of srg} to $\bfx$ gives
\begin{equation*}
\gamma^2\bfx
=\bfA^2\bfx
=(\lambda-\mu)\bfA\bfx+(k-\mu)\bfI\bfx+\mu\bfone\bfone^*\bfx
=[(\lambda-\mu)\gamma+(k-\mu)]\bfx.
\end{equation*}
Thus, the remaining $v-1$ eigenvalues of $\bfA$ are roots of the quadratic equation $\gamma^2-(\lambda-\mu)\gamma-(k-\mu)$, namely
\begin{equation*}
\gamma_+=\tfrac12\bigbracket{(\lambda-\mu)+\sqrt{(\lambda-\mu)^2+4(k-\mu)}},
\quad
\gamma_-=\tfrac12\bigbracket{(\lambda-\mu)-\sqrt{(\lambda-\mu)^2+4(k-\mu)}}.
\end{equation*}
To compute the multiplicities $m_+$ and $m_-$ of $\gamma_+$ and $\gamma_-$,
we write $m_+=\tfrac12(v-1)-x$ and $m_-=\tfrac12(v-1)+x$ for some $x\in\bbR$, and take the trace of $\bfA$ to find
\begin{equation*}
0
=\Tr(A)
=k+[\tfrac12(v-1)-x]\gamma_++[\tfrac12(v-1)+x]\gamma_-
=k+\tfrac12(v-1)(\gamma_++\gamma_-)-x(\gamma_+-\gamma_-).
\end{equation*}
Solving for $x$ gives
\begin{equation*}
m_+
=\frac12\biggbracket{(v-1)-\frac{2k+(v-1)(\lambda-\mu)}{\sqrt{(\lambda-\mu)^2+4(k-\mu)}}},
\qquad
m_-
=\frac12\biggbracket{(v-1)+\frac{2k+(v-1)(\lambda-\mu)}{\sqrt{(\lambda-\mu)^2+4(k-\mu)}}}.
\end{equation*}

The complement of an SRG is another SRG.
To be precise, the \textit{graph complement} of a given graph is obtained by disconnecting neighbors and connecting nonneighbors,
namely by considering the adjacency matrix $\tilde{\bfA}=\bfJ-\bfA-\bfI$.
If $\bfA\bfone=k\bfone$ then $\tilde{\bfA}\bfone=(\bfJ-\bfA-\bfI)\bfone=(v-k-1)\bfone$, meaning $\tilde{\bfA}$ is regular with degree $v-k-1$.
In this case, $\tilde{\bfA}\bfJ=\tilde{\bfA}\bfone\bfone^*=(v-k-1)\bfone\bfone^*=(v-k-1)\bfJ$ and $\bfJ\tilde{\bfA}=\bfJ^*\tilde{\bfA}^*=(\tilde{\bfA}\bfJ)^*=[(v-k-1)\bfJ]^*=(v-k-1)\bfJ$.
Having these facts, note that if $\bfA$ is a $\SRG(v,k,\lambda,\mu)$ then substituting $\bfA=\bfJ-\tilde{\bfA}-\bfI$ into~\eqref{equation.definition of srg} gives
\begin{align*}
(\mu-\lambda)\tilde{\bfA}+(k-\lambda)\bfI+\lambda\bfJ
&=(\lambda-\mu)(\bfJ-\tilde{\bfA}-\bfI)+(k-\mu)\bfI+\mu\bfJ\\
&=(\lambda-\mu)\bfA+(k-\mu)\bfI+\mu\bfJ\\
&=\bfA^2\\
&=(\bfJ-\tilde{\bfA}-\bfI)^2\\
&=\bfJ^2+\tilde{\bfA}^2+\bfI-\bfJ\bfA-\bfA\bfJ-2\bfJ+2\tilde{\bfA}\\
&=\tilde{\bfA}^2+2\tilde{\bfA}+\bfI+[v-2(v-k-1)-2]\bfJ\\
&=\tilde{\bfA}^2+2\tilde{\bfA}+\bfI-(v-2k)\bfJ.
\end{align*}
Solving for $\tilde{\bfA}^2$ then gives
\begin{equation*}
\tilde{\bfA}^2
=(\mu-\lambda-2)\tilde{\bfA}+(k-\lambda-1)\bfI+(v-2k+\lambda)\bfJ.
\end{equation*}
This means that $\tilde{\bfA}$ satisfies an equation of the form~\eqref{equation.definition of srg}, namely
$\tilde{\bfA}^2=(\tilde{\lambda}-\tilde{\mu})\tilde{\bfA}+(\tilde{k}-\tilde{\mu})\bfI+\tilde{\mu}\bfJ$ where
\begin{align*}
\tilde{\mu}&=v-2k+\lambda,\\
\tilde{\lambda}&=(\tilde{\lambda}-\tilde{\mu})+\tilde{\mu}=(\mu-\lambda-2)+(v-2k+\lambda)=v-2k+\mu-2,\\
\tilde{k}&=(\tilde{k}-\tilde{\mu})+\tilde{\mu}=(k-\lambda-1)+(v-2k+\lambda)=v-k-1.
\end{align*}
To summarize, the graph complement of a $\SRG(v,k,\lambda,\mu)$ is a $\SRG(v,v-k-1,v-2k+\mu-2,v-2k+\lambda)$.
In Brouwer's SRG tables~\cite{Brouwer07,Brouwer14}, a graph and its complement always appear in subsequent rows.

\section{Equating real ETFs and certain SRGs}

As seen in the previous section, an SRG is just an adjacency matrix that satisfies a quadratic relation~\eqref{equation.definition of srg}.
In this section, we discuss the traditional method for obtaining an SRG from a real ETF, and vice versa.

To begin, assume $\set{\bfphi_i}_{i=1}^{n}$ is an ETF for $\bbR^m$ with $m<n$, and let $\alpha=\tfrac{n}{m}$ and \smash{$\beta=[\frac{n-m}{m(n-1)}]^{\frac12}$} denote its redundancy and Welch bound, respectively.
From Lemma~\ref{lemma.Gram matrix ETF characterization}, we know its $n\times n$ Gram matrix $\bfG=\bfPhi^*\bfPhi$ satisfies $\bfG^2=\alpha\bfG$, has ones along its diagonal, and has values of $\pm\beta$ off its diagonal.
We can thus convert $\bfG$ into an adjacency matrix $\bfA$ by changing its diagonal entries to zero while changing the off-diagonal values of $\beta$ and $-\beta$ to $1$ and $0$, respectively.
That is, we let
\begin{equation}
\label{equation.ETF to SRG conversion}
\bfA=\tfrac1{2\beta}\bfG-\tfrac{\beta+1}{2\beta}\bfI+\tfrac12\bfJ.
\end{equation}
(A note: this is a slight departure from the existing literature in which $\beta$ and $-\beta$ are instead changed to $0$ and $1$ respectively~\cite{Waldron09}.
We make this change because if we view the frame vectors as points on a sphere, it is geometrically more natural to identify points as neighbors when the angle between them is acute, as opposed to obtuse.
Regardless, this decision is of little consequence mathematically, since making the other identification simply results in the complement of~\eqref{equation.ETF to SRG conversion}, which corresponds to an SRG if and only if~\eqref{equation.ETF to SRG conversion} does.)

To see whether $\bfA$ corresponds to an SRG, recall we must only determine whether it satisfies a relation of the form $\bfA^2=x\bfA+y\bfI+z\bfJ$ for some real scalars $x,y,z$.
This is plausible since $\bfG^2=\alpha\bfG$.
There is a problem, however: the number of $\beta$'s per row of $\bfG$ is not constant, meaning that when we attempt to compute $\bfA^2$, we have no way of simplifying the $\bfG\bfJ$ term.
In short, for an ETF in general, the graph defined by \eqref{equation.ETF to SRG conversion} need not be regular, let alone strongly regular.
Here, the standard remedy is to change the signs of the frame vectors so that the resulting graph has a strongly regular subgraph of $v=n-1$ vertices.

To elaborate, one can negate any number of the vectors $\set{\bfphi_i}_{i=1}^{n}$ to obtain a \textit{switching equivalent} ETF.
This corresponds to multiplying the $m\times n$ synthesis operator $\bfPhi$ on the right by an $n\times n$ diagonal matrix $\bfD$ whose diagonal entries are $\pm1$.
Note the resulting frame is still tight since $\bfPhi\bfD\bfD^*\bfPhi^*=\bfPhi\bfPhi^*=\alpha\bfI$.
Moreover, it is still equiangular since is Gram matrix $\bfD^*\bfPhi^*\bfPhi\bfD$ is obtained from $\bfPhi^*\bfPhi$ by multiplying some paired rows and columns by $-1$.
As such, given an $m\times n$ real ETF, we may negate $\bfphi_i$'s as necessary so as to assume, without loss of generality, that $\ip{\bfphi_1}{\bfphi_i}=\beta$ for all $i=2,\dotsc,n$.
Here, the resulting matrix~\eqref{equation.ETF to SRG conversion} is of the form
\begin{equation}
\label{equation.traditional SRG 1}
\bfA=\left[\begin{array}{ll}0&\bfone^*\\\bfone&\bfB\end{array}\right]
\end{equation}
where $\bfB$ is a $v\times v$ adjacency matrix.
Solving for the $\bfG$ in~\eqref{equation.ETF to SRG conversion} then gives
\begin{align}
\nonumber
\bfG
&=2\beta\bfA+(\beta+1)\bfI-\beta\bfJ\\
\nonumber
&=2\beta\left[\begin{array}{ll}0&\bfone^*\\\bfone&\bfB\end{array}\right]+(\beta+1)\left[\begin{array}{ll}1&\bfzero^*\\\bfzero&\bfI\end{array}\right]-\beta\left[\begin{array}{ll}1&\bfone^*\\\bfone&\bfJ\end{array}\right]\\
\label{equation.traditional SRG 2}
&=\left[\begin{array}{ll}1&\beta\bfone^*\\\beta\bfone&2\beta\bfB+(\beta+1)\bfI-\beta\bfJ\end{array}\right].
\end{align}
Substituting this expression for $\bfG$ into the relation $\bfG^2=\alpha\bfG$ then gives
\begin{equation}
\label{equation.traditional SRG 3}
\left[\begin{array}{ll}1&\beta\bfone^*\\\beta\bfone&2\beta\bfB+(\beta+1)\bfI-\beta\bfJ\end{array}\right]\left[\begin{array}{ll}1&\beta\bfone^*\\\beta\bfone&2\beta\bfB+(\beta+1)\bfI-\beta\bfJ\end{array}\right]
=\left[\begin{array}{ll}\alpha&\alpha\beta\bfone^*\\\alpha\beta\bfone&2\alpha\beta\bfB+\alpha(\beta+1)\bfI-\alpha\beta\bfJ\end{array}\right].
\end{equation}
Multiplying out the left-hand side of~\eqref{equation.traditional SRG 3} and then equating the upper-left terms gives $\alpha=v\beta^2+1$, something that also quickly follows from the fact that $v=n-1$, $\beta=[\frac{n-m}{m(n-1)}]^{\frac12}$ and $\alpha=\tfrac{n}{m}$.
Meanwhile, equating the lower-left terms of~\eqref{equation.traditional SRG 3} gives $\alpha\beta\bfone=\beta\bfone+2\beta^2\bfB\bfone+\beta(\beta+1)\bfone-v\beta^2\bfone$, namely
\begin{equation*}
\bfB\bfone=(\tfrac{v-1}{2}+\tfrac{\alpha-2}{2\beta})\bfone.
\end{equation*}
This means $\bfB$ corresponds to a regular graph of degree $k=\tfrac{v-1}{2}+\tfrac{\alpha-2}{2\beta}=\tfrac n2-1+(\tfrac n{2m}-1)[\tfrac{m(n-1)}{n-m}]^{\frac12}$.
We mention that the integrality of this $k$ is by no means obvious, and is in fact closely related to strong, previously known integrality conditions on the existence of real ETFs~\cite{HolmesP04,SustikTDH07,Waldron09}.
Since $\bfB\bfone=k\bfone$, $\bfB\bfJ=\bfB\bfone\bfone^*=k\bfone\bfone^*=k\bfJ$, at which point the symmetry of $\bfB$ and $\bfJ$ also gives $\bfJ\bfB=k\bfJ$.
This allows us to simplify the lower-right term of~\eqref{equation.traditional SRG 3}:
\begin{align}
\nonumber
2\alpha\beta\bfB+\alpha(\beta+1)\bfI-\alpha\beta\bfJ
&=\beta^2\bfJ+(2\beta\bfB+(\beta+1)\bfI-\beta\bfJ)^2\\
\nonumber
&=\beta^2\bfJ+4\beta^2\bfB^2+(\beta+1)^2\bfI+v\beta^2\bfJ+4\beta(\beta+1)\bfB-2\beta(\beta+1)\bfJ-2\beta^2\bfB\bfJ-2\beta^2\bfJ\bfB\\
\label{equation.traditional SRG 4}
&=4\beta^2\bfB^2+4\beta(\beta+1)\bfB+(\beta+1)^2\bfI+[(v-4k-1)\beta^2-2\beta]\bfJ.
\end{align}
Solving for $\bfB^2$ then gives an expression of the form $\bfB^2=x\bfB+y\bfI+z\bfJ$, namely that $\bfB$ is strongly regular:
\begin{equation*}
\bfB^2
=\tfrac{\alpha-2\beta-2}{2\beta}\bfB+\tfrac{(\alpha+\beta+1)(\beta+1)}{4\beta^2}\bfI-\tfrac{(\alpha-2)+(v-4k-1)\beta}{4\beta}\bfJ.
\end{equation*}
To explicitly determine the parameters of this SRG, recall that $\bfB^2=(\lambda-\mu)\bfB+(k-\mu)\bfI+\mu\bfJ$ and rewrite $k=\tfrac{v-1}{2}+\tfrac{\alpha-2}{2\beta}$ as $\alpha-2=(2k-v+1)\beta$ to obtain
\begin{equation*}
\mu
=-\tfrac{(\alpha-2)+\beta(v-4k-1)}{4\beta}
=-\tfrac{(2k-v+1)\beta+(v-4k-1)\beta}{4\beta}
=\tfrac{k}{2}.
\end{equation*}
This, in turn, immediately gives determines $\lambda$ according to~\eqref{equation.srg parameter relation}:
\begin{equation}
\label{equation.traditional SRG 5}
k(k-\lambda-1)
=(v-k-1)\mu
=(v-k-1)\tfrac{k}{2}
\quad
\Longrightarrow
\quad
\lambda
=(k-1)-\tfrac{v-k-1}{2}
=\tfrac{3k-v-1}{2}.
\end{equation}
We summarize these previously known facts in the following theorem:

\begin{theorem}
\label{theorem.n-1 ETF to SRG}
Let $\set{\bfphi_i}_{i=1}^{n}$ be an ETF for $\bbR^m$ where $m<n$, and assume without loss of generality that $\ip{\bfphi_1}{\bfphi_i}>0$ for all~$i$.
Letting $\alpha=\tfrac{n}{m}$ and $\beta=[\frac{n-m}{m(n-1)}]^{\frac12}$, the $(n-1)\times(n-1)$ matrix $\bfB$ such that
\begin{equation*}
\left[\begin{array}{ll}0&\bfone^*\\\bfone&\bfB\end{array}\right]
=\bfA
=\tfrac1{2\beta}\bfPhi^*\bfPhi-\tfrac{\beta+1}{2\beta}\bfI+\tfrac12\bfJ,
\end{equation*}
is the adjacency matrix of a $\SRG(v,k,\lambda,\mu)$ where $v=n-1$, $k=\tfrac n2-1+(\tfrac n{2m}-1)[\tfrac{m(n-1)}{n-m}]^{\frac12}$ and $\mu=\tfrac k2$.
\end{theorem}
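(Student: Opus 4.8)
The plan is to assemble the block-matrix computation carried out in the discussion immediately preceding the theorem into a single clean argument. The starting point is Lemma~\ref{lemma.Gram matrix ETF characterization}: since $\set{\bfphi_i}_{i=1}^{n}$ is an ETF, its Gram matrix $\bfG=\bfPhi^*\bfPhi$ satisfies $\bfG^2=\alpha\bfG$ with $\alpha=\tfrac nm$, has unit diagonal, and has off-diagonal entries of modulus $\beta>0$ (positivity of $\beta$ coming from $m<n$). First I would justify the ``without loss of generality'' reduction: negating a subset of the $\bfphi_i$ amounts to conjugating $\bfG$ by a $\pm1$ diagonal matrix, which preserves all three of these properties, so we may assume $\ip{\bfphi_1}{\bfphi_i}=\beta$ for $i=2,\dotsc,n$. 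With that normalization, the matrix $\bfA$ defined by~\eqref{equation.ETF to SRG conversion} has off-diagonal first row and column equal to $\bfone^*$ and $\bfone$, so it takes the block form~\eqref{equation.traditional SRG 1} with $\bfB$ a genuine $v\times v$ adjacency matrix, $v=n-1$.

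Next I would invert~\eqref{equation.ETF to SRG conversion} to get $\bfG=2\beta\bfA+(\beta+1)\bfI-\beta\bfJ$, substitute the block form, and plug the result into $\bfG^2=\alpha\bfG$. Equating the four blocks yields three facts. The $(1,1)$ block gives $\alpha=v\beta^2+1$, a consistency check that also follows directly from the formulas for $\alpha$, $\beta$, $v$. The off-diagonal blocks give $\bfB\bfone=k\bfone$ with $k=\tfrac{v-1}{2}+\tfrac{\alpha-2}{2\beta}$, i.e.\ $\bfB$ is regular of degree $k$; since $\bfB$ is a $0/1$ matrix, this $k$ is automatically a nonnegative integer. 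The $(2,2)$ block, after using regularity in the form $\bfB\bfJ=\bfJ\bfB=k\bfJ$ to eliminate the cross terms, collapses to a relation $\bfB^2=x\bfB+y\bfI+z\bfJ$ with explicit scalars. By the criterion recorded just after~\eqref{equation.definition of srg} (set $k=y+z$, $\lambda=x+z$, $\mu=z$; the trivial cases $\bfB=\bfzero$ and $\bfB=\bfJ-\bfI$ being harmless), this exhibits $\bfB$ as a $\SRG(v,k,\lambda,\mu)$.

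It then remains to pin down the parameters. From the off-diagonal block one has $\alpha-2=(2k-v+1)\beta$; substituting this into the scalar $z$ produced by the $(2,2)$-block computation gives $\mu=z=\tfrac k2$. Feeding $\mu=\tfrac k2$ into the universal SRG identity~\eqref{equation.srg parameter relation} gives $\lambda=\tfrac{3k-v-1}{2}$ as in~\eqref{equation.traditional SRG 5} (this is not needed for the statement but confirms the picture). Finally, substituting $v=n-1$, $\alpha=\tfrac nm$ and $\beta=[\tfrac{n-m}{m(n-1)}]^{1/2}$ into $k=\tfrac{v-1}{2}+\tfrac{\alpha-2}{2\beta}$ and simplifying yields the closed form $k=\tfrac n2-1+(\tfrac{n}{2m}-1)[\tfrac{m(n-1)}{n-m}]^{1/2}$ in the statement.

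I expect the only real friction to be bookkeeping: multiplying out the $2\times2$ block product in $\bfG^2=\alpha\bfG$, and in particular expanding $(2\beta\bfB+(\beta+1)\bfI-\beta\bfJ)^2$ while correctly tracking the coefficient of $\bfJ$ once the $\bfB\bfJ$ and $\bfJ\bfB$ terms are rewritten as $k\bfJ$. There is no conceptual obstacle: the essential point is that the block structure forced by the normalization of $\bfphi_1$ is exactly what makes $\bfB$ regular, and regularity of $\bfB$ is exactly what is needed to turn $\bfG^2=\alpha\bfG$ into a quadratic relation of the SRG type for $\bfB$.
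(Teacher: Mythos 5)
Your proposal is correct and follows essentially the same route as the paper, which proves this theorem via exactly the block computation you describe: normalizing by switching equivalence, inverting \eqref{equation.ETF to SRG conversion}, substituting the block form into $\bfG^2=\alpha\bfG$, and reading off regularity of $\bfB$ from the off-diagonal blocks and the SRG relation (with $\mu=\tfrac k2$) from the lower-right block. No substantive differences to report.
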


In summary, every $n$-vector real ETF yields an SRG on $v=n-1$ vertices where $\mu=\frac k2$.
For an example of this result, consider $m=7$ and $n=28$.
A real $7\times 28$ real ETF indeed exists.
It can be constructed, for example, as a Steiner ETF~\cite{FickusMT12} arising from the \textit{Fano plane}, a special type of finite geometry.
Here, $\alpha=4$ and $\beta=\frac13$, and so Theorem~\ref{theorem.n-1 ETF to SRG} tells us that there exists an SRG with $v=27$, $k=16$ and $\mu=8$
(at which point~\eqref{equation.srg parameter relation} gives $\lambda=10$).
Indeed, consulting a table of SRGs~\cite{Brouwer07,Brouwer14}, a $\SRG(27,16,10,8)$ is known to exist.
This begs the question: if we only knew that a $\SRG(27,16,10,8)$ exists, could we reverse the above argument to prove that a $7\times28$ real ETF exists?
As we now detail, the answer is yes.
In fact, we can convert any SRG into a real ETF provided $\mu=\frac k2$.

To be precise, now let $\bfB$ be the $v\times v$ adjacency matrix of any given $\SRG(v,k,\lambda,\mu)$ for which $\mu=\frac k2$.
Let $n=v+1$ and let~\eqref{equation.traditional SRG 1} define an $n\times n$ adjacency matrix $\bfA$.
For any $\beta\in\bbR$, we can use~\eqref{equation.traditional SRG 2} to define an $n\times n$ real symmetric matrix $\bfG$ whose diagonal entries are all $1$ and whose off-diagonal entries are either $\pm\beta$.
We claim that for an appropriate choice of $\beta$, this matrix $\bfG$ is the Gram matrix of a $m\times n$ ETF for a certain choice of $m$.
In light of Lemma~\ref{lemma.Gram matrix ETF characterization}, this reduces to finding scalars $\alpha,\beta$ so that the matrix $\bfG$ defined in~\eqref{equation.traditional SRG 2} satisfies $\bfG^2=\alpha\bfG$, namely~\eqref{equation.traditional SRG 3}.

Note that if $\beta=0$ then $\bfG=\bfI$ is the Gram matrix of an orthonormal sequence of vectors, which is a trivial type of ETF.
As such, we assume $\beta\neq0$ from this point forward.
Parallelling our earlier discussion, satisfying the upper-left part of~\eqref{equation.traditional SRG 3} is equivalent to having
\begin{equation}
\label{equation.traditional SRG 7}
\alpha=v\beta^2+1.
\end{equation}
Next, since $\bfB$ is a $\SRG(v,k,\lambda,\mu)$ we have $\bfB\bfone=k\bfone$.
As such, satisfying the lower-left and upper-right parts of~\eqref{equation.traditional SRG 3} is equivalent to having
\begin{equation*}
\alpha\beta\bfone
=\beta\bfone+2\beta^2\bfB\bfone+\beta(\beta+1)\bfone-v\beta^2\bfone
=[-(v-2k-1)\beta^2+2\beta]\bfone,
\end{equation*}
that is, to
\begin{equation}
\label{equation.traditional SRG 8}
\alpha=-(v-2k-1)\beta+2.
\end{equation}
We next claim that~\eqref{equation.traditional SRG 7} and~\eqref{equation.traditional SRG 8} automatically imply the lower-right part of~\eqref{equation.traditional SRG 3}.
To see this, note that in light of~\eqref{equation.traditional SRG 4}, we want to show that
\begin{equation}
\label{equation.traditional SRG 6}
2\alpha\beta\bfB+\alpha(\beta+1)\bfI-\alpha\beta\bfJ
=4\beta^2\bfB^2+4\beta(\beta+1)\bfB+(\beta+1)^2\bfI+[(v-4k-1)\beta^2-2\beta]\bfJ.
\end{equation}
To simplify this further, note that since $\bfB$ is a $\SRG(v,k,\lambda,\mu)$ we have $\bfB^2=(\lambda-\mu)\bfB+(k-\mu)\bfI+\mu\bfJ$.
Moreover, since $\mu=\frac k2$, \eqref{equation.traditional SRG 5} gives $\lambda=\tfrac{3k-v-1}{2}$ and so
$\bfB^2=-\tfrac{v-2k+1}2\bfB+\tfrac k2\bfI+\tfrac k2\bfJ$, making~\eqref{equation.traditional SRG 6} equivalent to
\begin{align*}
2\alpha\beta\bfB+\alpha(\beta+1)\bfI-\alpha\beta\bfJ
&=4\beta^2[-\tfrac{v-2k+1}2\bfB+\tfrac k2\bfI+\tfrac k2\bfJ]+4\beta(\beta+1)\bfB+(\beta+1)^2\bfI+[(v-4k-1)\beta^2-2\beta]\bfJ\\
&=2\beta[-(v-2k-1)\beta+2]\bfB+[2k\beta^2+(\beta+1)^2]\bfI+\beta[-(v-2k-1)\beta+2]\bfJ.
\end{align*}
Equating the coefficients of $\bfB$, $\bfI$ and $\bfJ$, it thus suffices to meet the three conditions:
\begin{align*}
2\alpha\beta&=2\beta[-(v-2k-1)\beta+2],\\
\alpha(\beta+1)&=2k\beta^2+(\beta+1)^2,\\
\alpha\beta&=\beta[-(v-2k-1)\beta+2].
\end{align*}
The first and last of these conditions are immediately obtained by multiplying~\eqref{equation.traditional SRG 7} by $2\beta$ and $\beta$, respectively.
Meanwhile, the middle condition is implied by a combination of~\eqref{equation.traditional SRG 7} and~\eqref{equation.traditional SRG 8}:
\begin{equation*}
\alpha(\beta+1)
=\alpha\beta+\alpha
=[-(v-2k-1)\beta^2+2\beta]+(v\beta^2+1)
=(2k+1)\beta^2+2\beta+1
=2k\beta^2+(\beta+1)^2
\end{equation*}

To summarize, given the incidence matrix $\bfB$ of a $(v,k,\lambda,\mu)$-SRG, any scalars $\alpha,\beta$ that satisfy~\eqref{equation.traditional SRG 7} and~\eqref{equation.traditional SRG 8} will yield the Gram matrix $\bfG$ of an ETF via~\eqref{equation.traditional SRG 2}.
These conditions on $\alpha$ and $\beta$ are equivalent to defining $\alpha=v\beta^2+1$ where $\beta$ is any root of the quadratic
\begin{equation*}
v\beta^2+(v-2k-1)\beta-1=0.
\end{equation*}
That is,
\begin{equation}
\label{equation.traditional SRG 9}
\beta
=\tfrac1{2v}\bigset{-(v-2k-1)\pm[(v-2k-1)^2+4v]^{\frac12}}
=\tfrac1{2v}[-\delta\pm(\delta^2+4v)^{\frac12}],
\end{equation}
where we introduce the notion of the \textit{deviation} $\delta=v-2k-1$ of an SRG.
This means that for any SRG with nonzero deviation, there are two choices of $\beta$ that make~\eqref{equation.traditional SRG 2} into an ETF, one positive and the other negative.
This makes sense, since every ETF has a Naimark complement and their two Gram matrices have opposite sign patterns.
For more confirmation, we use~\eqref{equation.traditional SRG 9} to compute the dimension $m$ of space in which our ETF lies:
since $m$ is the multiplicity of $\alpha$ as an eigenvalue of $\bfG$, \eqref{equation.traditional SRG 8} and~\eqref{equation.traditional SRG 9} give
\begin{equation}
\label{equation.traditional SRG 10}
v+1
=n
=\Tr(\bfG)
=m\alpha
=m(v\beta^2+1),
\end{equation}
and so
$m=\frac{v+1}{v\beta^2+1}=\frac{2v(v+1)}{(\delta^2+4v)\mp\delta(\delta^2+4v)^{\frac12}}$.
To simplify this further, multiply by the conjugate of the denominator:
\begin{equation}
\label{equation.traditional SRG 11}
m
=\tfrac{2v(v+1)[(\delta^2+4v)\pm\delta(\delta^2+4v)^{\frac12}]}{(\delta^2+4v)^2-\delta^2(\delta^2+4v)}
=\tfrac{2v(v+1)[(\delta^2+4v)\pm\delta(\delta^2+4v)^{\frac12}]}{4v(\delta^2+4v)^2}
=\tfrac{v+1}{2}\bigparen{1\pm\tfrac{\delta}{(\delta^2+4v)^{\frac12}}}.
\end{equation}
Note these two possibilities for $m$ add up to $n=v+1$, just as we expect the ambient dimension of ETF and its Naimark complement to do.
To ensure that $\beta$ is the Welch bound, we choose ``$+$" in both~\eqref{equation.traditional SRG 9} and~\eqref{equation.traditional SRG 11}.
Indeed, since $m$ satisfies~\eqref{equation.traditional SRG 10} and $\alpha=v\beta^2+1$,
we know $\alpha=\frac nm$ and $\beta^2=\frac{\alpha-1}{n-1}=\frac{n-m}{m(n-1)}$; choosing ``$+$" gives $\beta>0$ and so \smash{$\beta=[\frac{n-m}{m(n-1)}]^{\frac12}$}.
We summarize these facts as follows:

\begin{theorem}
\label{theorem.n-1 SRG to ETF}
Let $\bfB$ the adjacency matrix of an $\SRG(v,k,\lambda,\mu)$ with $\mu=\frac k2$.
Then
\begin{equation*}
m=\tfrac{v+1}{2}\bigset{1+\tfrac{v-2k-1}{[(v-2k-1)^2+4v]^{\frac12}}}
\end{equation*}
is the unique choice of $m$ for which there exists $\beta>0$ such that
\begin{equation*}
\bfG
=\left[\begin{array}{ll}1&\beta\bfone^*\\\beta\bfone&2\beta\bfB+(\beta+1)\bfI-\beta\bfJ\end{array}\right],
\end{equation*}
is the Gram matrix of an real ETF for $\bbR^m$ of $n=v+1$ vectors.
Here, $\beta$ is necessarily \smash{$[\frac{n-m}{m(n-1)}]^{\frac12}$}.
\end{theorem}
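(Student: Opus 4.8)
The plan is to read everything off from Lemma~\ref{lemma.Gram matrix ETF characterization} together with the block computations~\eqref{equation.traditional SRG 2}--\eqref{equation.traditional SRG 11} already performed above. The existence half is essentially in hand; the genuinely new content is the \emph{uniqueness} of $m$, and it reduces to noting that one quadratic in $\beta$ has exactly one positive root.

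\textbf{Existence.} I would take $\beta$ to be the unique positive root of $v\beta^2+(v-2k-1)\beta-1=0$ (it exists and is nonzero because the product of the two real roots is $-1/v<0$) and set $\alpha=v\beta^2+1$, so that~\eqref{equation.traditional SRG 7} and~\eqref{equation.traditional SRG 8} both hold. Substituting the block form~\eqref{equation.traditional SRG 2} of $\bfG$ into $\bfG^2=\alpha\bfG$: the $(1,1)$ block gives~\eqref{equation.traditional SRG 7}; the off-diagonal blocks reduce, via $\bfB\bfone=k\bfone$, to~\eqref{equation.traditional SRG 8}; and the lower-right block collapses, via $\bfB^2=-\tfrac{v-2k+1}{2}\bfB+\tfrac k2\bfI+\tfrac k2\bfJ$ (this is where $\mu=\tfrac k2$ and hence~\eqref{equation.traditional SRG 5} enter), to~\eqref{equation.traditional SRG 6}, whose three scalar components all follow from~\eqref{equation.traditional SRG 7}--\eqref{equation.traditional SRG 8}, as verified in the discussion above. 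Thus $\bfG^2=\alpha\bfG$. Since the diagonal of $\bfG$ is all ones and its off-diagonal entries have modulus $\beta$ (the $(1,j)$ entries being $\beta$, while $2\beta\bfB+(\beta+1)\bfI-\beta\bfJ$ has entries $1$ on its diagonal and $\pm\beta$ off it), Lemma~\ref{lemma.Gram matrix ETF characterization} gives that $\bfG$ is the Gram matrix of an ETF, and its proof identifies the ambient dimension as $m=\Tr(\bfG)/\alpha=n/\alpha$, the multiplicity of $\alpha$ as an eigenvalue. Feeding $n=v+1$, $\alpha=v\beta^2+1$, and the positive root~\eqref{equation.traditional SRG 9} into~\eqref{equation.traditional SRG 10}--\eqref{equation.traditional SRG 11} and rationalizing yields the stated closed form (the ``$+$'' branch); then $\alpha=n/m$ forces $\beta^2=(\alpha-1)/(n-1)=\tfrac{n-m}{m(n-1)}$, so $\beta=[\tfrac{n-m}{m(n-1)}]^{\frac12}$ as $\beta>0$. (Since $m$ is a multiplicity, it is automatically a positive integer, recovering known integrality restrictions on real ETFs.)

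\textbf{Uniqueness.} Suppose $\beta'>0$ and $m'$ are such that the corresponding matrix $\bfG'$ is the Gram matrix of a real ETF for $\bbR^{m'}$. By the forward implication of Lemma~\ref{lemma.Gram matrix ETF characterization}, $(\bfG')^2=\alpha'\bfG'$ for some $\alpha'$. Its $(1,1)$ entry forces $\alpha'=v(\beta')^2+1$, and any $(i,1)$ entry with $i\geq2$ forces (using $\bfB\bfone=k\bfone$) $\alpha'=2-(v-2k-1)\beta'$; subtracting, $\beta'$ is a root of $v\beta^2+(v-2k-1)\beta-1=0$, hence equals the unique positive root $\beta$ from the existence step. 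Then $\alpha'=\alpha$ and $m'=\Tr(\bfG')/\alpha'=n/\alpha=m$.

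I expect the one delicate point to be the sign bookkeeping --- confirming that the positive root in~\eqref{equation.traditional SRG 9} pairs with the ``$+$'' sign in~\eqref{equation.traditional SRG 11}, so that $m$ is the value in the statement rather than its Naimark-complementary partner $n-m$. Beyond that, the argument is only a citation of Lemma~\ref{lemma.Gram matrix ETF characterization} plus a recombination of the already-verified identities~\eqref{equation.traditional SRG 7}--\eqref{equation.traditional SRG 8}, so no new idea beyond the ``exactly one positive root'' observation should be required.
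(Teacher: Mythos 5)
Your proposal is correct and follows essentially the same route as the paper: the paper likewise shows that the $(1,1)$ and off-diagonal blocks of $\bfG^2=\alpha\bfG$ are \emph{equivalent} to $\alpha=v\beta^2+1$ and $\alpha=2-(v-2k-1)\beta$, that the lower-right block then comes for free from $\mu=\tfrac k2$, and that $\beta$ must be a root of $v\beta^2+(v-2k-1)\beta-1=0$, whose unique positive root pins down $\alpha$ and hence $m=n/\alpha$. Your uniqueness paragraph merely makes explicit what the paper leaves implicit in its ``two roots, one positive and one negative'' remark, and your sign bookkeeping (the ``$+$'' branches of~\eqref{equation.traditional SRG 9} and~\eqref{equation.traditional SRG 11} pairing up) matches the paper's.
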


To the best of our knowledge, the fact that $m$ is unique---that a single SRG cannot lead to multiple ETFs for spaces of various dimensions---has not appeared before in the literature.
Also, we did not find the above formula for $m$ in terms of $v$ and $k$ in the existing literature.
This is a minor realization, however, since this formula can obtained by inverting the change of variables given in Theorems~\ref{theorem.n-1 ETF to SRG}.
Though this can be seen by carefully following the details of the above discussion, we can also verify it directly:
\begin{lemma}
For any real numbers $m$, $n$, $v$, $k$ where $n>\max\set{m,1}$ and $v>0$,
\begin{equation*}
v=n-1,\ k=\tfrac n2-1+(\tfrac n{2m}-1)[\tfrac{m(n-1)}{n-m}]^{\frac12}
\quad
\Longleftrightarrow
\quad
n=v+1,\ m=\tfrac{v+1}{2}\bigset{1+\tfrac{v-2k-1}{[(v-2k-1)^2+4v]^{\frac12}}}.
\end{equation*}
\end{lemma}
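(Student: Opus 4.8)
The plan is to prove the equivalence by verifying each direction as a straightforward algebraic substitution, relying on the fact that both sides merely re-encode the same change of variables. Since the right-hand side substitutes $n = v+1$, both directions begin by eliminating one of the two size parameters: in the forward direction we already have $v = n-1$, so every occurrence of $v$ can be replaced by $n-1$; in the reverse direction we have $n = v+1$, so every occurrence of $n$ can be replaced by $v+1$. Thus the real content is the single scalar identity relating $k$ and $m$, and I would isolate that identity first.

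First I would handle $(\Rightarrow)$. Assume $v = n-1$ and $k = \tfrac n2 - 1 + (\tfrac{n}{2m}-1)[\tfrac{m(n-1)}{n-m}]^{\frac12}$. Introduce the abbreviation $\beta = [\tfrac{n-m}{m(n-1)}]^{\frac12} > 0$, so that $[\tfrac{m(n-1)}{n-m}]^{\frac12} = \beta^{-1}$ and $\tfrac{n}{2m} - 1 = \tfrac{n-2m}{2m}$. Rewriting, $k = \tfrac{n-2}{2} + \tfrac{n-2m}{2m}\beta^{-1}$, hence $2k - (n-2) = \tfrac{n-2m}{m}\beta^{-1}$, i.e. $2k - v + 1 = \tfrac{n-2m}{m}\beta^{-1}$. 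Now compute the deviation $\delta = v - 2k - 1 = -(2k - v + 1) = -\tfrac{n-2m}{m}\beta^{-1}$. The goal is to show $m = \tfrac{v+1}{2}(1 + \tfrac{\delta}{(\delta^2 + 4v)^{1/2}})$, equivalently (using $v+1 = n$) that $\tfrac{2m}{n} - 1 = \tfrac{\delta}{(\delta^2+4v)^{1/2}}$. The left side equals $-\tfrac{n-2m}{n}$. So it suffices to show $(\delta^2 + 4v)^{1/2} = \tfrac{n}{n-2m}\,(-\delta)\cdot\operatorname{sign} = \tfrac{n}{m}\beta^{-1}$ after substituting $\delta = -\tfrac{n-2m}{m}\beta^{-1}$; squaring, this reduces to $\delta^2 + 4v = \tfrac{n^2}{m^2}\beta^{-2}$. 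Since $\delta^2 = \tfrac{(n-2m)^2}{m^2}\beta^{-2}$ and $\beta^{-2} = \tfrac{m(n-1)}{n-m}$, the claim becomes $\tfrac{(n-2m)^2}{m^2}\cdot\tfrac{m(n-1)}{n-m} + 4(n-1) = \tfrac{n^2}{m^2}\cdot\tfrac{m(n-1)}{n-m}$, i.e. after clearing $\tfrac{n-1}{m(n-m)}$: $(n-2m)^2 + 4m(n-m) = n^2$, which is the trivial identity $n^2 - 4mn + 4m^2 + 4mn - 4m^2 = n^2$. Tracking the sign (that $-\delta$ and $n - 2m$ have the same sign, both $\beta^{-1}>0$) completes this direction.

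For $(\Leftarrow)$ I would reverse the computation: assume $n = v+1$ and the given formula for $m$. Reading the chain backwards, $\tfrac{2m}{n} - 1 = \tfrac{\delta}{(\delta^2+4v)^{1/2}}$ with $\delta = v - 2k - 1$, and the identity $(n-2m)^2 + 4m(n-m) = n^2$ forces $(\delta^2 + 4v) = \tfrac{n^2}{m^2}\beta^{-2}$ once we \emph{define} $\beta$ by $\beta^2 = \tfrac{n-m}{m(n-1)}$ (valid and positive since $n > \max\{m,1\}$ guarantees $0 < m < n$ and $n - 1 > 0$). From $\tfrac{2m}{n}-1 = \tfrac{\delta}{(\delta^2+4v)^{1/2}}$ and the sign of the left side we recover $\delta = -\tfrac{n-2m}{m}\beta^{-1}$, hence $2k - v + 1 = \tfrac{n-2m}{m}\beta^{-1}$, hence $k = \tfrac{v-1}{2} + \tfrac{n-2m}{2m}\beta^{-1} = \tfrac{n}{2} - 1 + (\tfrac{n}{2m}-1)[\tfrac{m(n-1)}{n-m}]^{\frac12}$, which is the desired formula, and $v = n - 1$ is immediate. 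The one subtlety to watch is the sign/branch of the square root: I expect the main (and essentially only) obstacle to be bookkeeping the sign of $n - 2m$ consistently so that taking the positive square root $(\delta^2+4v)^{1/2}$ matches $\tfrac{n}{m}|\beta|^{-1}$ with the correct overall sign in $\tfrac{2m}{n}-1$; everything else is the single polynomial identity $(n-2m)^2 + 4m(n-m) = n^2$.
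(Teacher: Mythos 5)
Your proposal is correct and follows essentially the same route as the paper: both directions are handled by direct algebraic substitution, with the whole equivalence reducing to one polynomial identity --- your $(n-2m)^2+4m(n-m)=n^2$ is exactly the paper's $(\alpha-2)^2+4(\alpha-1)=\alpha^2$ under $\alpha=n/m$ --- plus the sign bookkeeping for the square root that you correctly flag. The only cosmetic difference is your choice of abbreviation ($\beta$ and the deviation $\delta$ rather than the paper's $\alpha$ and $\varepsilon$), and the only loose end is noting that in the reverse direction the positivity $m>0$ comes from $\abs{\varepsilon}<1$ rather than from the stated hypotheses.
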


\begin{proof}
($\Rightarrow$)
Given $m,n$ where $n>m$ and $n>1$, let $v=n-1>0$ and $k=\tfrac{n}{2}-1+\tfrac{\alpha-2}{2}(\tfrac{n-1}{\alpha-1})^{\frac12}=\tfrac{v-1}2+\tfrac{\alpha-2}{2}(\tfrac{v}{\alpha-1})^{\frac12}$ where $\alpha=\frac nm$.
Then we indeed have $v+1=n$ and also
\begin{equation*}
v-2k-1=v-(v-1)-(\alpha-2)(\tfrac{v}{\alpha-1})^{\frac12}-1=-(\alpha-2)(\tfrac{v}{\alpha-1})^{\frac12},
\end{equation*}
implying
\begin{equation*}
\tfrac{v+1}{2}\bigset{1+\tfrac{v-2k-1}{[(v-2k-1)^2+4v]^{\frac12}}}
=\tfrac{v+1}{2}\bigset{1-(\alpha-2)(\tfrac{v}{\alpha-1})^{\frac12}[\tfrac{\alpha-1}{(\alpha-2)^2v+4v(\alpha-1)}]^{\frac12}}
=\tfrac{v+1}{2}(1-\tfrac{\alpha-2}{\alpha})
=\tfrac{v+1}{\alpha}
=m.
\end{equation*}
($\Leftarrow$)
Given $v,k$ where $v>0$, let $n=v+1>1$ and $m=\tfrac{v+1}2(1+\varepsilon)$ where $\varepsilon=\tfrac{v-2k-1}{[(v-2k-1)^2+4v]^{\frac12}}$.
Since $\abs{\varepsilon}<1$ we have $n>m$.
Also, $n-1=v$ and
\begin{equation*}
\tfrac n2-1+(\tfrac n{2m}-1)[\tfrac{m(n-1)}{n-m}]^{\frac12}
=\tfrac{v+1}2-1+(\tfrac{1}{1+\varepsilon}-1)[\tfrac{(1+\varepsilon)v}{1-\varepsilon}]^{\tfrac12}\\
=\tfrac{v-1}2-\tfrac{\varepsilon v^{\frac12}}{1+\varepsilon}[\tfrac{(1+\varepsilon)^2}{1-\varepsilon^2}]^{\frac12}
=\tfrac{v-1}2-\tfrac{\varepsilon v^{\frac12}}{(1-\varepsilon^2)^{\frac12}}.
\end{equation*}
Since $\tfrac1{1-\varepsilon^2}=\tfrac{(v-2k-1)^2+4v}{(v-2k-1)^2+4v-(v-2k-1)^2}=\frac{(v-2k-1)^2+4v}{4v}$, this becomes
\begin{equation*}
\tfrac n2-1+(\tfrac n{2m}-1)[\tfrac{m(n-1)}{n-m}]^{\frac12}
=\tfrac{v-1}2-\tfrac{(v-2k-1)v^{\frac12}}{[(v-2k-1)^2+4v]^{\frac12}}\tfrac{[(v-2k-1)^2+4v]^{\frac12}}{2v^{\frac12}}
=\tfrac{(v-1)-(v-2k-1)}2
=k.\qedhere
\end{equation*}
\end{proof}

When taken together with the previous two theorems, this lemma implies that the ``if" statements of these theorems are actually ``if and only if."
For example, we have already seen that the parameters $(m,n)=(7,28)$ lead to the parameters $(v,k)=(27,16)$.
As such, Theorem~\ref{theorem.n-1 ETF to SRG} states that a $7\times 28$ real ETF implies the existence of a $\SRG(27,16,10,8)$.
Moreover, the previous lemma implies that under the change of variables of Theorem~\ref{theorem.n-1 SRG to ETF}, the parameters $(v,k)=(27,16)$ correspond to $(m,n)=(7,28)$, and so this result states that if there exists a $\SRG(27,16,10,8)$ then there exists a $7\times 28$ real ETF.
Altogether, we see that real ETFs are equivalent to SRGs in which $\mu=\frac k2$.

We conclude with a few minor observations.
Note that taking the graph complement of a given SRG equates to taking a Naimark complement of its corresponding real ETF and then negating $\varphi_1$.
As such, graph complements preserve our $\mu=\frac k2$ property.
This can also be easily seen by recalling the formulas for the parameters of the complement of an SRG:
\begin{equation*}
\tilde{\mu}
=v-2k+\lambda
=v-2k+\tfrac{3k-v-1}{2}
=\tfrac{v-k-1}{2}
=\tfrac{\tilde{k}}{2}.
\end{equation*}
Further note that taking a graph complement negates the deviation $\delta=v-2k-1$ of an SRG:
\begin{equation*}
\tilde{\delta}
=\tilde{v}-2\tilde{k}-1
=v-2(v-k-1)-1
=-(v-2k-1)
=-\delta.
\end{equation*}
This essentially corresponds to choosing ``$-$" in~\eqref{equation.traditional SRG 9} and~\eqref{equation.traditional SRG 11} instead of ``$+$".

\section*{Acknowledgments}

This work was partially supported by NSF DMS 1321779.
The views expressed in this article are those of the authors and do not reflect the official policy or position of the United States Air Force, Department of Defense, or the U.S.~Government.

\end{document}